\newtheorem{tm}{Theorem}
\newtheorem{defi}{Definition}
\newtheorem{rem}{Remark}
\newtheorem{rems}{Remarks}
\newtheorem{lm}{Lemma}
\newtheorem{ex}{Example}
\newtheorem{cor}{Corollary}
\newtheorem{prop}{Proposition}
\newtheorem{nota}{Notation}
\newtheorem{prob}{Problem}
\begin{document}

\title{On universal sign patterns}

\author{Vladimir Petrov Kostov}

\address{Universit\'e C\^ote d’Azur, CNRS, LJAD, France}
\email{vladimir.kostov@unice.fr}

\begin{abstract}
  We consider polynomials $Q:=\sum _{j=0}^da_jx^j$, $a_j\in \mathbb{R}^*$,
  with all roots real. When the {\em sign pattern}
  $\sigma (Q):=({\rm sgn}(a_d),{\rm sgn}(a_{d-1})$, $\ldots$, ${\rm sgn}(a_0))$
  has $\tilde{c}$ sign changes,
  the polynomial $Q$ has $\tilde{c}$ positive and $d-\tilde{c}$ negative roots.
  We suppose the moduli of these roots distinct. The {\em order} of these
  moduli is defined when in their string as points of the positive half-axis
  one marks
  the places of the moduli of negative roots. A sign pattern $\sigma^0$
  is {\em universal} when
  for any possible order of the moduli there exists a polynomial $Q$
  with $\sigma (Q)=\sigma^0$ and with this order of the moduli of its roots.
  We show that when the polynomial $P_{m,n}:=(x-1)^m(x+1)^n$ has no vanishing
  coefficients, the sign pattern $\sigma (P_{m,n})$ is universal. We also 
  study the question when $P_{m,n}$ can have vanishing coefficients. 
  
  {\bf Key words:} real polynomial in one variable; hyperbolic polynomial;
  sign pattern; Descartes' rule of signs\\

{\bf AMS classification:} 26C10; 30C15
\end{abstract}
\maketitle

\section{Introduction}

The present paper deals with univariate real polynomials
$Q:=\sum _{j=0}^da_jx^j$, $a_j\in \mathbb{R}$, $a_d\neq 0$. The classical
Descartes' rule of signs states that the number $\ell_+$ of positive roots
of such a polynomial (counted with multiplicity) is majorized by the number
$\tilde{c}$ of sign changes in the sequence of its coefficients and the
difference $\tilde{c}-\ell_+$ is even, see \cite{Ca}, \cite{Cu}, \cite{DG},
\cite{Des}, \cite{FoNoSh}, \cite{Fo}, \cite{Ga}, \cite{J}, \cite{La} or
\cite{Mes}. (When
defining the number $\tilde{c}$ one has to erase the zero coefficients.) 
We are mainly concerned with {\em hyperbolic} polynomials, i.~e. ones with all
roots real. If all coefficients of a hyperbolic polynomial are
non-vanishing, then $\ell_+=\tilde{c}$ and $\ell_-=\tilde{p}$, where $\ell_-$ is
the number of negative roots and $\tilde{p}$ is the number of sign
preservations; clearly $\ell_++\ell_-=\tilde{c}+\tilde{p}=d$.

We are interested in a problem closely related to Descartes' rule of signs.

\begin{defi}
  {\rm A {\em sign pattern} of length $d+1$ is a string of $d+1$ signs
    $+$ and/or $-$. If all coefficients $a_j$ are non-zero, then we say
    that the polynomial $Q$ defines the sign pattern
    $\sigma (Q):=({\rm sgn}(a_d),{\rm sgn}(a_{d-1})$,
    $\ldots$, ${\rm sgn}(a_0))$.
    If there is at least one vanishing coefficient, then we say that
    $\sigma (Q)$ is a {\em generalized sign pattern} and its components
    can be equal to $+$, $-$ or~$0$. For a given sign pattern, we define
    its corresponding {\em change-preservation pattern} by writing the letter
    $c$
    when there are two consecutive signs $(+,-)$ or $(-,+)$ and the letter
    $p$ if
    there are signs $(+,+)$ or $(-,-)$. Thus the change-preservation pattern
    corresponding to the sign pattern $(+,+,-,-,-,+,-,+)$ is $pcppccc$.
    We consider mainly monic polynomials, so the correspondence between sign
    and change-preservation patterns is bijective.}
\end{defi}

One can consider the moduli of the roots of $Q$. We limit ourselves to the
case when all these moduli are distinct. The definition of the
{\em order} of these moduli should be clear from the following example.

\begin{ex}
  {\rm Suppose that $d=8$ and the polynomial $Q$ has positive roots
    $\alpha_1<\alpha_2<\alpha_3$ and negative roots $-\gamma _j$, $j=1$,
    $\ldots$, $5$, such that}

  $$\gamma_1<\alpha_1<\gamma _2<\gamma_3<\gamma_4<\alpha_2<\gamma_5<\alpha_3~.$$
  {\rm Then we say that the order of moduli defined by the roots of $Q$ is
    $NPNNNPNP$, i.~e. the letters $P$ and $N$ indicate the relative positions
    of the moduli of positive and negative roots.}
\end{ex}

\begin{defi}\label{deficompat}
  {\rm (1) A couple (sign pattern, order of moduli) is {\em compatible} if the
    number of sign changes (resp. sign preservations) of the sign pattern is
    equal to the number of letters $P$ (resp. $N$) of the order of moduli.
    In what follows we consider only compatible couples
    and we write ``couples'' for short.
    \vspace{1mm}
    
    (2) A couple is {\em realizable} if there
    exists a hyperbolic polynomial which defines the sign pattern of the couple
    and the moduli of whose roots define the given order. We can also say that
    the given sign pattern is realizable with the given order of moduli or
  vice versa.}
\end{defi}

In the present paper we consider the following problem:

\begin{prob}\label{prob1}
  For a given degree $d$, which couples are realizable?
  \end{prob}

The problem is settled for $d\leq 6$, see \cite{KoSo} and \cite{GaKoTaMC}.
We remind now some results towards the resolution of Problem~\ref{prob1}.

\begin{defi}\label{defican}
  {\rm Given a sign pattern $\sigma^*$ one defines its corresponding
    {\em canonical} order of moduli as follows: one reads $\sigma^*$
    from the right and to each sign change (resp. sign preservation) one puts
    into correspondence the letter $P$ (resp. $N$). Example: the canonical
    order corresponding to the sign pattern $(+,-,-,-,+,+,-,+,-,-,+)$ is
  $PNPPPNPNNP$.}
\end{defi}

\begin{rems}\label{remscanrig}
  {\rm (1) Each couple (sign pattern, corresponding canonical order)
    is realizable,
see~\cite[Proposition~1]{KoSe}. However there exist sign patterns
(called also {\em canonical}, see~\cite{KoRM}) which are realizable
{\em only} with
their corresponding canonical orders. They are characterized by the property
not to contain four consecutive signs $(+,+,-,-)$, $(-,-,+,+)$,
$(+,-,-,+)$ or $(-,+,+,-)$, so the corresponding change-preservation patterns
have no triple $cpc$ or $pcp$, i.~e. no isolated sign change and no
isolated sign preservation except at the two ends.
\vspace{1mm}

(2) There exist also orders of moduli (called {\em rigid}) which are realizable
with a single sign pattern. Except the {\em trivial case} when all roots are
of the same sign and the change-preservation pattern is $cc\ldots c$ or
$pp\ldots p$, rigid are the orders $\ldots NPNPNP$ and $\ldots PNPNPN$,
realizable with the sign patterns $(+,-,-,+,+,-,-,+,+,\ldots )$ and
$(+,+,-,-,+,+,-,-,+,+,\ldots )$ respectively, see~\cite{KorigMO}.}
\end{rems}

The present paper explores the inverse phenomenon, i.~e. 
sign patterns realizable with all compatible orders of moduli.

\begin{rem}
  {\rm Excluding the trivial case, orders of moduli realizable
    with all compatible sign patterns do not exist. Indeed, if such an order
    $\Omega$ exists (with $\tilde{c}>0$ letters $P$ and $\tilde{p}>0$
    letters $N$), then it should be realizable with each of the two
    change-preservation patterns, $cc\ldots cpp\ldots p$ and
    $pp\ldots pcc\ldots c$, each with $\tilde{c}$ letters $c$ and $\tilde{p}$
    letters~$p$. However these sign patterns are canonical, see part~(1) of
    Remarks~\ref{remscanrig}, and their corresponding canonical orders of
    moduli are different, see Definition~\ref{defican},
    so $\Omega$ cannot be equal to each of them.}
  \end{rem}

\begin{defi}\label{defiuniv}
  {\rm A sign pattern is called {\em universal} if it is realizable with all
    compatible orders of moduli.}
  \end{defi}

There exist natural candidates for the role of universal sign patterns.

\begin{nota}
  {\rm (1) We set $P_{m,n}:=(x-1)^m(x+1)^n$, $m$, $n\in \mathbb{N}$. We denote
    by $\sigma_j(P_{m,n})$ the sign $+$, $-$ or $0$ of the coefficient of
    $x^j$ of the polynomial $P_{m,n}$. 
It is clear that the polynomial $P_{m,n}$ is
{\em self-reciprocal}, i.~e. $x^{m+n}P_{m,n}(1/x)=(-1)^mP_{m,n}(x)$. Hence the
sign changes and sign preservations in $\sigma (P_{m,n})$ are situated
symmetrically w.r.t. the middle of $\sigma (P_{m,n})$. 
As $P_{m,n}(-x)=(-1)^{m+n}P_{n,m}(x)$, it suffices to study the polynomials
$P_{m,n}$ for $m\leq n$.
\vspace{1mm}

(2) We denote by $\Sigma_{k_1,k_2,k_3,\ldots}$ a sign pattern beginning with
$k_1$ signs $+$ followed by $k_2$
signs $-$ followed by $k_3$ signs $+$ etc.
For a generalized sign pattern we use a similar notation including zeros at the
places between two numbers $k_i$ where zero coefficients occur, see part (1)
of Lemma~\ref{lmMB}. Thus the
generalized sign pattern $(+,+,0,-,-,-,+,0,-,-)$ will be denoted also by
  $\Sigma _{2,0,3,1,0,2}$.}
  \end{nota}

\begin{rem}
  {\rm For $d\geq 4$, a sign pattern $\Sigma_{k_1,k_2,\ldots k_s}$ is canonical
    if and only if $k_i\neq 2$ for $i=2$, $\ldots$, $s-1$. For $d=3$,
    not canonical are exactly $\Sigma_{1,2,1}$ and $\Sigma_{2,2}$.}
  \end{rem}

\begin{tm}\label{tmuniv}
  (1) When the polynomial $P_{m,n}$ has no vanishing coefficients, the sign
  pattern $\sigma (P_{m,n})$ is universal.
\vspace{1mm}

  (2) For $d=2m+1$, a sign pattern with $m$ sign changes and $m+1$ sign
  preservations is universal if and only if this is the sign pattern
  $\sigma (P_{m,m+1})=\Sigma _{2,2,\ldots ,2}$.
  \vspace{1mm}
  
  (3) For $m$ even, one has $\sigma (P_{m,m+2})=\Sigma _{2,\ldots,2,3,2,\ldots ,2}$
  ($m/2$ times $2$, one $3$ and again $m/2$ times $2$). For $m$ odd,
  $\sigma (P_{m,m+2})=\Sigma_{2,2,\ldots ,2,0,2,2,\ldots ,2}$
  is a generalized sign pattern 
  consisting of $(m+1)/2$ couples of equal signs (the signs of two consecutive
  couples being opposite), one $0$ and again $(m+1)/2$ couples of equal signs.
\end{tm}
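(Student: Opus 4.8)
\emph{Plan of proof.} I will handle the three parts in turn, deriving (2) and (3) from (1) together with an explicit factorization and the classification of rigid orders. For part (1) the plan is to perturb the multiple roots of $P_{m,n}$. All $m$ positive roots equal $1$ and all $n$ negative roots equal $-1$, so every modulus equals $1$; the sign pattern has exactly $m$ sign changes and $n$ sign preservations and, by hypothesis, no zero coefficient. Given any compatible order (a word in $m$ letters $P$ and $n$ letters $N$), I would place the positive roots at $1+\varepsilon a_i$ and the negative roots at $-(1+\varepsilon b_j)$ for small $\varepsilon>0$, choosing distinct reals $a_i,b_j$ so that listing the moduli $1+\varepsilon a_i,\,1+\varepsilon b_j$ in increasing order reproduces the prescribed interleaving of $P$'s and $N$'s; this is always possible since all moduli sit in an arbitrarily small neighbourhood of $1$. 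The resulting monic polynomial $Q_\varepsilon$ is hyperbolic and realizes the chosen order, and since its coefficients tend to those of $P_{m,n}$ as $\varepsilon\to 0$ while the latter are all nonzero, for $\varepsilon$ small enough $\sigma(Q_\varepsilon)=\sigma(P_{m,n})$. Thus every compatible order is realizable and $\sigma(P_{m,n})$ is universal.

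For part (2) I would first use the factorization $P_{m,m+1}=(x+1)(x^2-1)^m$. Expanding $(x^2-1)^m=\sum_k(-1)^{m-k}\binom{m}{k}x^{2k}$ shows that the coefficients of $x^{2k}$ and of $x^{2k+1}$ are both equal to $(-1)^{m-k}\binom{m}{k}$; hence no coefficient vanishes and consecutive degrees $2k,2k+1$ share a sign that flips with $k$, giving $\sigma(P_{m,m+1})=\Sigma_{2,2,\ldots,2}$ (with $m+1$ blocks). Universality of this pattern is then immediate from part (1). For the converse I would invoke Remarks~\ref{remscanrig}(2): the alternating order $NPNP\cdots PN$ with $m$ letters $P$ and $m+1$ letters $N$ is rigid and is realizable with the single sign pattern $\Sigma_{2,\ldots,2}$ (indeed it is its canonical order). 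Therefore any sign pattern with $m$ sign changes and $m+1$ sign preservations different from $\Sigma_{2,\ldots,2}$ fails to realize this one compatible order and so cannot be universal.

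For part (3) the key is the analogous factorization $P_{m,m+2}=(x+1)^2(x^2-1)^m$. Multiplying $\sum_k(-1)^{m-k}\binom{m}{k}x^{2k}$ by $x^2+2x+1$ gives coefficient of $x^{2j+1}$ equal to $2(-1)^{m-j}\binom{m}{j}$ and coefficient of $x^{2j}$ equal to $(-1)^{m-j}\big(\binom{m}{j}-\binom{m}{j-1}\big)$. The odd-degree coefficients never vanish and alternate in sign, while an even-degree coefficient vanishes exactly when $\binom{m}{j}=\binom{m}{j-1}$, that is only for $m$ odd and $j=(m+1)/2$, the middle degree $m+1$. Since $\binom{m}{j}-\binom{m}{j-1}$ is positive below the centre and negative above it, a direct check shows that below the centre each even coefficient shares the sign of the following odd one, above the centre each odd coefficient shares the sign of the following even one, and consecutive such pairs have opposite signs. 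This yields blocks of length two throughout, except that for $m$ even the two pairings meeting at the central degree $m+1$ fuse into one block of three, giving $\Sigma_{2,\ldots,2,3,2,\ldots,2}$, whereas for $m$ odd the central coefficient is $0$ and the pattern splits as $\Sigma_{2,\ldots,2,0,2,\ldots,2}$ with $(m+1)/2$ couples on either side, consecutive couples (including the two flanking the $0$) being of opposite sign.

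I expect the only subtle point to be the converse in part~(2), where one must correctly pin down the rigid order $NPNP\cdots PN$ and cite its rigidity; parts~(1) and~(3) are, respectively, a routine perturbation argument and a bookkeeping of binomial signs.
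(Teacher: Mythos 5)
Your proposal is correct. Part (1) is essentially the paper's own argument (perturb the roots $1$ and $-1$ to $1+\varepsilon\alpha_i$ and $-1-\varepsilon\beta_j$ so as to realize any prescribed order, then let $\varepsilon\to 0$ to keep the sign pattern), and your ``only if'' in part (2) via the rigidity of the alternating order $NPNP\cdots PN$ is exactly the paper's. Where you genuinely diverge is in how you identify $\sigma(P_{m,m+1})$ and $\sigma(P_{m,m+2})$: you compute the coefficients explicitly from the factorizations $(x+1)(x^2-1)^m$ and $(x+1)^2(x^2-1)^m$, obtaining $\pm\binom{m}{k}$ for both of degrees $2k,2k+1$ in the first case and $\pm 2\binom{m}{j}$, $\pm\bigl(\binom{m}{j}-\binom{m}{j-1}\bigr)$ in the second, and then read off the block structure (including the central block of three for $m$ even and the central zero for $m$ odd) by tracking where the binomial difference changes sign. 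The paper instead only extracts the non-vanishing statement from these expansions (part (4) of Proposition~\ref{prop4parts}) and derives the block structure structurally: by part (1) of Proposition~\ref{propPmn} every sign has a like neighbour, so a pattern of length $2m+2$ (resp.\ $2m+3$) with $m$ sign changes must split into $m+1$ blocks of length exactly $2$ (resp.\ $m$ blocks of length $2$ and one of length $3$, placed centrally by self-reciprocity). Your computation is more elementary and self-contained, at the cost of a slightly fiddly sign bookkeeping around the centre; the paper's route reuses a general structural fact about all the $P_{m,n}$. Both are sound.
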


We give comments on the theorem in the next section and its proof in
Section~\ref{secprtmuniv}. While the proof of part (1) is short and needs no
preliminary results, the proofs of parts (2) and (3) rely on 
Section~\ref{secPmn} in which we study the question when the polynomial $P_{m,n}$
can have zero coefficients.

\section{Comments\protect\label{seccomments}}

If a hyperbolic polynomial $Q$ has at least one vanishing coefficient, then
it is not true that $\ell _-=\tilde{p}$, but only that $\ell _-$ is equal to
the number of sign changes in the sequence of coefficients of the polynomial
$Q(-x)$. Example: for $Q:=x^2-1$, one has
$\ell_+=\ell_-=\tilde{c}=1\neq \tilde{p}=0$; see also Remark~\ref{rem0}.
Bearing this difference in mind,
one can define realizability of
couples (generalized sign pattern,
order of moduli) as this is done for sign patterns in
part (2) of Definition~\ref{deficompat},
and then define universal generalized sign patterns by analogy with
Definition~\ref{defiuniv}.  

For $d\leq 6$, the following two statements hold true
(this can be directly checked using the results
in \cite{KoSo} and \cite{GaKoTaMC}):
\vspace{1mm}

\begin{itemize}

\item All universal sign patterns are of the form $\sigma (P_{m,n})$ for
  some $m$ and $n$.
  \vspace{1mm}
  
\item When $\sigma (P_{m,n})$ is a generalized sign pattern, not
  a sign pattern, then there exists no universal sign pattern with
  $\tilde{c}=m$ and $\tilde{p}=n$. Hence for $d\leq 6$,
  no generalized sign pattern is
  universal, otherwise all sign patterns obtained from it by replacing its
  zeros by $+$ or $-$ should also be universal. (One can perturb the
  coefficients of $P_{m,n}$ to make them all non-zero without changing
  the order of moduli.)
  \end{itemize}

It would be interesting to prove or disprove that these two statements hold
true for any $d\in \mathbb{N}$. 

In the next section we deal with the situation when $\sigma (P_{m,n})$ is not
a sign pattern, but a generalized sign pattern, so part (1) of
Theorem~\ref{tmuniv} cannot be applied. For $n>m$ and as $m\rightarrow \infty$,
in asymptotically $1/4$ of the cases there is a vanishing coefficient in
$P_{m,n}$. These are the cases when $m$ and $n$ are odd and this is the middle
coefficient, see part~(1) of Proposition~\ref{prop4parts}. Zero coefficients
not in the middle also occur, but they seem to appear less and less frequently
as $m\rightarrow \infty$, see Propositions~\ref{prop4parts} 
and~\ref{propcoeffxk}, Remarks~\ref{remsmn1}
and Remarks~\ref{remscoeffxk}.

\section{Vanishing coefficients of the polynomials $P_{m,n}$
  \protect\label{secPmn}}

\subsection{Generalized sign patterns of the polynomials $P_{m,n}$}

We begin this subsection by a result about the vanishing coefficients of
the polynomials $P_{m,n}$, in particular for small values of~$m$.

\begin{prop}\label{prop4parts}
  (1) For $m$ and $n$ odd, the middle coefficient of the polynomial
  $P_{m,n}$ vanishes. For any $m$ and $n$, if the coefficient of $x^j$ in
  $P_{m,n}$ vanishes, then the coefficient of $x^{m+n-j}$ also vanishes.
  For $m=1$ and $n$ even, the polynomial $P_{m,n}$ has no vanishing coefficient.
  For $m=1$ and $n$ odd, the middle coefficient of $P_{m,n}$ is its only
  vanishing coefficient. For $m$ and $n$ even, the middle coefficient
  of $P_{m,n}$ does not vanish.

  (2) For $m=2$ and $n\geq 2$,
  the polynomial $P_{m,n}$ has vanishing coefficients exactly
  when $n=\nu ^2-2$, $\nu \in \mathbb{N}^*$, $\nu \geq 2$. These are the
  coefficients of $x^j$, $j=(\nu \pm 1)\nu /2$, and the generalized sign
  pattern $\sigma (P_{2,n})$ equals $\Sigma _{(\nu -1)\nu /2,0,\nu -1,0,(\nu -1)\nu /2}$.
  
  For $\nu ^2-2<n<(\nu +1)^2-2$, one has

  $$\left\{ \begin{array}{ll}
    \sigma (P_{2,n})=\Sigma _{[(n-\nu )/2]+1,~\nu +1,~[(n-\nu )/2]+1}&{\rm 
  for}~~n-\nu~~{\rm even~~and}\\ \\ 
    \sigma (P_{2,n})=\Sigma _{[(n-\nu )/2]+2,~\nu ,~[(n-\nu )/2]+2}&{\rm for}~~n-\nu~~
           {\rm odd.}\end{array}\right.$$

   (3) For $m=3$ and $n\geq 3$, the polynomial $P_{m,n}$ has vanishing
  coefficients exactly
  in the following two cases:

  (a) when $n$ is odd and the coefficient of $x^{(n+3)/2}$ vanishes
  (see part~(1));

  (b) when $n=(\mu ^2-7)/3$, where $\mu \in \mathbb{N}$, $\mu \geq 4$, is not
  divisible by $3$; in this case vanishing are the coefficients of $x^j$,
  $j=(\mu \pm 1)(\mu \pm 2)/6$.

  Set $\gamma :=(\mu -1)(\mu -2)/6$. For $n=(\mu ^2-7)/3$, one obtains

  $$\sigma (P_{3,n})=\left\{ \begin{array}{ll}
    \Sigma _{\gamma ,0,(\mu -1)/2,(\mu -1)/2,0,\gamma}&
           {\rm for~}\mu \equiv 1~{\rm or~}5~\mod 6~{\rm and}\\ \\
           \Sigma_{\gamma ,0,\mu /2-1,0,\mu /2-1,0,\gamma}&{\rm for~}
           \mu \equiv 2~{\rm or~}4~\mod 6~.\end{array}\right.$$

  For $(\mu ^2 -7)/3<n<((\mu +1)^2-7)/3$,
  one has

$$\sigma (P_{3,n})=\left\{ \begin{array}{ll}
    \Sigma _{\rho ,\kappa ,0,\kappa ,\rho}&{\rm for~}n~{\rm odd,~with}\\ \\
    \rho =\frac{n+3-\mu}{2},~\kappa =\frac{\mu}{2}~{\rm for~}\mu{\rm ~even,}&
    \rho =\frac{n+4-\mu}{2},~
    \kappa =\frac{\mu -1}{2}~{\rm for~}\mu{\rm ~odd,}\\ \\ 
    \Sigma _{\rho ,\kappa ,\kappa ,\rho}&{\rm for~}n{\rm ~even,~with}\\ \\
    \rho =\frac{n+4-\mu}{2},~\kappa =\frac{\mu}{2}~{\rm for~}\mu{\rm ~even,}&
    \rho =\frac{n+3-\mu}{2},~
    \kappa =\frac{\mu +1}{2}~{\rm for~}\mu{\rm ~odd.}\end{array}\right.$$
    
  (4) For $m\in \mathbb{N}^*$, the coefficients of all odd powers of
  $x$ in $P_{m,m}$ and only they are~$0$ while $P_{m,m+1}$ and, for even
  $m$, $P_{m,m+2}$ have no vanishing coefficients. The only vanishing
  coefficient of $P_{m,m+2}$ for $m$ odd is the one of $x^{m+1}$.
\end{prop}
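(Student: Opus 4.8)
The plan is to treat all four parts through a single description of the coefficients of $P_{m,n}$. Writing $P_{m,n}=(x-1)^m(x+1)^n=(-1)^m(1-x)^m(1+x)^n$ and expanding both factors, the coefficient $a_j:=[x^j]P_{m,n}$ is a signed $m$-th finite difference of binomial coefficients,
$$a_j=(-1)^m\sum_{i=0}^{m}(-1)^i\binom{m}{i}\binom{n}{j-i}.$$
From the self-reciprocity relation $x^{m+n}P_{m,n}(1/x)=(-1)^mP_{m,n}(x)$ recalled in the Notation one reads off $a_{m+n-j}=(-1)^m a_j$; this immediately gives the pairing statement of part (1) (if $a_j=0$ then $a_{m+n-j}=0$), and, taking $j=(m+n)/2$ when $m+n$ is even, it forces $a_{(m+n)/2}=0$ as soon as $(-1)^m=-1$, i.e.\ when $m$ and $n$ are both odd.

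For the remaining assertions of part (1) I would argue directly. For $m=1$ the formula collapses to $a_j=\binom{n}{j-1}-\binom{n}{j}$, which vanishes exactly when $\binom{n}{j-1}=\binom{n}{j}$, i.e.\ when $j=(n+1)/2$; this occurs (and only at the middle) precisely when $n$ is odd, and never when $n$ is even. For $m$ and $n$ both even the nonvanishing of the middle coefficient is cleanest via an integral: substituting $x=e^{i\theta}$ and using $e^{i\theta}-1=2ie^{i\theta/2}\sin(\theta/2)$, $e^{i\theta}+1=2e^{i\theta/2}\cos(\theta/2)$ gives
$$a_{(m+n)/2}=\frac{2^{m+n}i^m}{2\pi}\int_0^{2\pi}\sin^m(\theta/2)\cos^n(\theta/2)\,d\theta;$$
for $m$ even $i^m$ is real and nonzero, while for $n$ even the integrand is nonnegative and not identically zero, so the integral (a Beta value) is strictly positive and $a_{(m+n)/2}\neq0$.

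The substance of parts (2) and (3) is to determine the sign of every $a_j$, which I would do by factoring a single binomial coefficient out of the difference. For $m=2$ one gets $a_j=\binom{n}{j-1}\cdot\big((n+2-2j)^2-(n+2)\big)/\big(j(n+2-j)\big)$, so $\mathrm{sgn}(a_j)$ equals the sign of the upward parabola $(n+2-2j)^2-(n+2)$: $a_j$ is positive near the two ends and negative in a central band, giving the three-block pattern $\Sigma_{k_1,k_2,k_1}$, and a zero occurs iff $n+2=\nu^2$ is a perfect square, at $j=\nu(\nu\pm1)/2$, recovering the $n=\nu^2-2$ case. The block sizes then follow by counting the integers $j$ with $|n+2-2j|<\sqrt{n+2}$, split by the parity of $n-\nu$ with $\nu=\lfloor\sqrt{n+2}\rfloor$, which produces the stated floor expressions. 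For $m=3$ the same factorization reduces $\mathrm{sgn}(a_j)$ to the sign of a cubic in $j$ that, by the symmetry $a_j=-a_{n+3-j}$, is antisymmetric about $j=(n+3)/2$ and hence of the form $(j-\tfrac{n+3}{2})\big((j-\tfrac{n+3}{2})^2-\rho\big)$; its non-central zeros are the symmetric pair $j=(\mu\pm1)(\mu\pm2)/6$ (differing by $\mu$, summing to $n+3$), which are integers exactly when $3n+7=\mu^2$ with $\mu\not\equiv0\pmod3$, and reading off the signs of the cubic and counting as before yields the four displayed forms of $\sigma(P_{3,n})$. I expect this bookkeeping to be the main obstacle: proving that the quadratic and cubic sign functions have no integer zeros beyond those found (so that the perfect-square conditions are exactly the vanishing cases), deriving the dependence of $\rho$ on $n$, and carrying out the parity-dependent lattice-point counts.

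Part (4) is most transparent through the factorization $P_{m,n}=(x^2-1)^m(x+1)^{n-m}$ for $n-m\in\{0,1,2\}$. For $n=m$ this is $(x^2-1)^m=\sum_k(-1)^{m-k}\binom{m}{k}x^{2k}$, so exactly the odd-power coefficients vanish. For $n=m+1$, multiplying by $(x+1)$ shows every coefficient of $x^{2k}$ and of $x^{2k+1}$ equals a nonzero $\pm\binom{m}{k}$. For $n=m+2$, multiplying $\sum_k(-1)^{m-k}\binom{m}{k}x^{2k}$ by $(x+1)^2=x^2+2x+1$ gives odd-power coefficients $\pm2\binom{m}{k}\neq0$ and even-power coefficients $(-1)^{m-k}\big(\binom{m}{k}-\binom{m}{k-1}\big)$, which vanish iff $\binom{m}{k}=\binom{m}{k-1}$, i.e.\ iff $m$ is odd and the power is $x^{m+1}$; hence no zeros for $m$ even and the single zero at $x^{m+1}$ for $m$ odd.
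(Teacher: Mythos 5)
Your proposal is correct, and for parts (2)--(4) it follows essentially the paper's own route: clearing the binomial denominators in $b_{n,j}=0$ and $c_{n,j}=0$ produces exactly the paper's quadratic $4j^2-(4n+8)j+(n+1)(n+2)$ and the cubic that factors as $2\bigl(j-\tfrac{n+3}{2}\bigr)\bigl(4j^2-(4n+12)j+(n+2)(n+1)\bigr)$, and your factored form $a_j=\binom{n}{j-1}\bigl((n+2-2j)^2-(n+2)\bigr)/\bigl(j(n+2-j)\bigr)$ (valid for $1\le j\le n+1$) is the same identity repackaged, with the small bonus that it reads off the three-block sign structure of $\sigma(P_{2,n})$ directly from the sign of the parabola rather than inferring it; the lattice-point counts you defer as bookkeeping are precisely what the paper carries out by bracketing $\tilde j_-$ (resp.\ $j^*_-$) between consecutive integers or half-integers according to the parity of $n-\nu$ (resp.\ $n-\mu$), and by computing $\kappa$ from the total length and self-reciprocity -- your count of integers $j$ with $|n+2-2j|<\sqrt{n+2}$ is equivalent, so this is a deferral, not a gap, and your part (4) coincides with the paper's verbatim. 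The one genuinely different ingredient is the even--even middle coefficient in part (1): the paper argues combinatorially -- if it vanished, Lemma~\ref{lmMB} would force a sign change at the middle, and self-reciprocity would make the total number of sign changes odd, contradicting the fact that there are exactly $m$ (even) sign changes -- whereas you evaluate $a_{(m+n)/2}=\tfrac{2^{m+n}i^m}{2\pi}\int_0^{2\pi}\sin^m(\theta/2)\cos^n(\theta/2)\,d\theta$, which is a correct identity whose integral is strictly positive for $m,n$ even. Your integral argument is self-contained (it needs neither Lemma~\ref{lmMB} nor Descartes' rule) and yields slightly more, namely that the middle coefficient has sign $(-1)^{m/2}$; the paper's argument stays purely combinatorial and reuses machinery it needs elsewhere anyway.
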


\begin{rem}
  {\rm The values of $n$ as defined in part (3)(b) of
    Proposition~\ref{prop4parts} form the following sequence
    $\mathcal{S}$ (see \cite[A369304]{OEIS}):~$3$, $6$, $14$,
    $19$, $31$, $38$, $54$, $63$, $83$, $94$, $118$, $131$, $\ldots$, in which 
    pairs of even numbers alternate with pairs of odd
    numbers. For an even $n$ of this sequence, the polynomial $P_{3,n}$
    has exactly two and for
    an odd $n$, it has exactly three zero coefficients, see
    part (3) of Proposition~\ref{prop4parts}. Consider the sequence
    of differences between two consecutive terms of the sequence
    $\mathcal{S}$: $3$, $8$, $5$, $12$, $7$, $16$, $9$, $20$, $11$, $24$, $13$,
    $\ldots$. It takes alternatively the values of the two arithmetic
    progressions $3$, $5$, $7$, $\ldots$ and $8$, $12$, $16$, $\ldots$.}
  \end{rem}

\begin{proof}[Proof of Proposition~\ref{prop4parts}]
  The first two claims of part (1) follow from the self-reciprocity of $P_{m,n}$.
  In particular, when $m$ and $n$ are odd, one has
  $x^{m+n}P_{m,n}(1/x)=-P_{m,n}(x)$ and
  the middle coefficient of $P_{m,n}$ is equal to its opposite, so it is~$0$.
  For $m=1$, one has $P_{m,n}=\sum _{j=0}^n({n\choose j-1}-{n\choose j})x^j$, so
  $P_{m,n}$ has a zero coefficient if and only if $n$ is odd and $j=(n+1)/2$.
  If $m$ and $n$ are even and the middle coefficient vanishes, then by
  part~(2) of Lemma~\ref{lmMB} there is a sign change in the middle of the sign
  pattern $\sigma (P_{m,n})$. By self-reciprocity the remaining sign changes
  are symmetrically situated w.r.t. the middle of $\sigma (P_{m,n})$, so
  the total number of sign changes is odd which contradicts the parity of~$m$.
\vspace{1mm}

  Part (2). For $m=2$, one has

  $$\begin{array}{cccccc}
    P_{2,n}&=&(x^2-2x+1)\sum _{j=0}^n{n\choose j}x^j&=&
  \sum _{j=0}^{n+2}b_{n,j}x^j~,&{\rm where}\\ \\ 
  b_{n,j}&:=&{n\choose j}-2{n\choose j-1}+{n\choose j-2}~.&&&\end{array}$$
  The condition $b_{n,j}=0$ reads

  $$\begin{array}{lccc}
    (n-j+2)(n-j+1)-2(n-j+2)j+j(j-1)&=&0~,&{\rm i.~e.}\\ \\ 
  4j^2-(4n+8)j+(n+2)(n+1)&=&0~,&\end{array}$$
  so one obtains $j=\tilde{j}_{\pm}:=(n+2\pm \sqrt{n+2})/2$.
  These values of $j$ are integer
  exactly when $n=\nu ^2-2$, $\nu \in \mathbb{N}^*$, $\nu \geq 2$, 
  from which the first two claims of part (2) follow.

  Suppose that $\nu ^2-2<n<(\nu +1)^2-2$, i.~e.

  $$\nu ^2<n+2<(\nu +1)^2~~~\, {\rm and}~~~\,
  (n+1-\nu )/2<\tilde{j}_-<(n+2-\nu )/2~.$$
  If $n-\nu$ is even, then

  $$[(n-\nu )/2]<[(n-\nu )/2]+1/2<\tilde{j}_-<[(n-\nu )/2]+1$$
  (where $[.]$ stands
  for the integer part). If $n-\nu$ is odd, then

  $$[(n-\nu )/2]+1<\tilde{j}_-<[(n-\nu )/2]+3/2<[(n-\nu )/2]+2$$
  from which the rest of
  part (2) follows. 
\vspace{1mm}

  Part (3). For $m=3$, one has

  $$\begin{array}{l}P_{3,n}=(x^3-3x^2+3x-1)\sum _{j=0}^n{n\choose j}x^j=
  \sum _{j=0}^{n+2}c_{n,j}x^j~,~~~\, {\rm where}\\ \\ 
  c_{n,j}:={n\choose j}-3{n\choose j-1}+3{n\choose j-2}-{n\choose j-3}~.
  \end{array}$$
  The condition $c_{n,j}=0$ yields

  $$\begin{array}{l}(n-j+3)(n-j+2)(n-j+1)-3(n-j+3)(n-j+2)j\\ \\
    +3(n-j+3)j(j-1)-
    j(j-1)(j-2)=0~,~~~\, {\rm i.~e.}\\ \\
2(j-(n+3)/2)(4j^2-(4n+12)j+(n+2)(n+1))=0~.
  \end{array}$$
  The first factor defines an integer solution $j=(n+3)/2$ exactly when $n$ is
  odd. This proves part (3)(a).

  The second factor is $0$ for
  $j=j^*_{\pm}:=(n+3\pm \sqrt{3n+7})/2$
  and $j^*_{\pm}$ are
  integer only if $n=(\mu ^2-7)/3$, $\mu \in \mathbb{N}$, in which case
  $n$ and $\mu$ are of opposite parity and
  $j^*_{\pm}=(\mu \pm 1)(\mu \pm 2)/6$. Both these values are integer exactly
  when $\mu$ is not divisible by $3$. This proves the first displayed formula
  of part (3)(b).

  It is clear that if $j^*_{\pm}$ are
  not integer, then $\rho =[j^*_-]+1$. The condition

  $$(\mu ^2 -7)/3<n<((\mu +1)^2-7)/3$$
  can be given the equivalent form

  $$-\mu -1<-\sqrt{3n+7}<-\mu ~,$$
  therefore

  $$(n+2-\mu )/2<j^*_-<(n+3-\mu )/2~,~~~\, {\rm i.~e.}$$

  $$\begin{array}{cl}
    (n-\mu )/2+1<j^*_-<(n-\mu )/2+3/2<(n-\mu +4)/2&
    {\rm for}~~n-\mu ~~{\rm even}~,\\ \\
    (n-\mu -1)/2+3/2<j^*_-<(n-\mu +3)/2&{\rm for}~~n-\mu ~~{\rm odd}~.
    \end{array}$$
  When $n$ and $\mu$ are of the same parity, then $n-\mu$ is even and
  $\rho =(n-\mu +4)/2$. When they are of different parity, then
  $n-\mu$ is odd and $\rho =(n-\mu +3)/2$.

  It remains to find~$\kappa$.
  To this end one has to notice that when $n$ is odd, then the length
  of the sign pattern is odd and $\kappa +\rho =(n+3)/2$, so
  $\kappa =\mu /2$ for $\mu$ even and $\kappa =(\mu -1)/2$ for $\mu$ odd.
  When $n$ is even, then the length
  of the sign pattern is even and $\kappa +\rho =(n+4)/2$, thus
  $\kappa =\mu /2$ for $\mu$ even and $\kappa =(\mu +1)/2$ for $\mu$ odd.
  This proves the second displayed formula of part (3)(b). 
  \vspace{1mm}

  Part (4). The first claim of part (4) results from $P_{m,m}=(x^2-1)^m$. As

  \begin{equation}\label{equP}
    P_{m,m+1}=(x+1)P_{m,m}=x\sum _{j=0}^m{m\choose j}(-1)^jx^{2j}+
  \sum _{j=0}^m{m\choose j}(-1)^jx^{2j}~,\end{equation}
  any coefficient of $P_{m,m+1}$ up to a sign is a binomial coefficient hence
  non-zero. As in equality (\ref{equP}) one finds that any coefficient of
  
  $$\begin{array}{ccl}P_{m,m+2}&=&(x^2+2x+1)P_{m,m}\\ \\
    &=&2x\sum_{j=0}^m(-1)^j{m\choose j}x^{2j}+
    \sum_{j=0}^{m+1}\left( (-1)^{j-1}{m\choose j-1}+
    (-1)^j{m\choose j}\right) x^{2j}\end{array}$$
  is either of the form $\pm 2{m\choose j}\neq 0$
  or of the form $\pm ({m\choose j}-{m\choose j-1})$. If $m$ is even, then
  the latter difference is non-zero for any $j$, $1\leq j\leq m$. If $m$ is
  odd, then this difference vanishes exactly when $j=(m+1)/2$ which gives the
  coefficient of~$x^{m+1}$.
\end{proof}

\begin{rems}\label{remsmn1}
  {\rm (1) It would be interesting to show that for any $m\geq 1$ fixed, there
    are infinitely many values of $n$ for which no coefficient of $P_{m,n}$
    vanishes and infinitely many values for which at least one coefficient
    is~$0$.
    \vspace{1mm}

    (2) In the proofs of parts (2) and (3) of Proposition~\ref{prop4parts},
    for $m=2$ and $3$, we obtained formulas for the values of $j$ expressed as
    functions of $n$ such that when $j\in \mathbb{N}$, then the coefficient of
    $x^j$ in $P_{m,n}$ vanishes. Part (1) of the proposition implies that for
    $m=1$, one obtains $j=(n+1)/2$. Similar formulas exist also
    for $m=4$ and $m=5$. For $m=4$, one obtains four values of $j$:

    $$(n+4\pm \sqrt{\pm \theta +3n+8})/2~,~~~\,
    \theta :=\sqrt{6n^2+30n+40}~.$$
    For $n=13$ and for $-\theta$, one obtains $j=(17\pm 3)/2$,
    i.~e. $j=7$ and~$10$; for $n=13$ and for $+\theta$, one gets $j=3.89\ldots$
    and $13.10\ldots$, i.~e. one does not obtain
    integer values for~$j$. Thus $\sigma (P_{4,13})=\Sigma_{4,3,0,2,0,3,4}$.
    For $n=62$ and for $-\theta$, one obtains
    $j=(66\pm 6)/2$, i.~e. $j=30$ and~$36$; for $n=62$ and for $+\theta$,
    one does not obtain integer values for~$j$ and in a similar way one
    concludes that 
    $\sigma (P_{4,62})=\Sigma_{24,6,0,5,0,6,24}$. For $m=4$ and
    $5\leq n\leq 200$, these are the only cases in which one 
    obtains vanishing coefficients in~$P_{m,n}$. For $n=151$, one has
    $\theta =2^347$, but neither of the two quantities $\pm \theta +3n+8$
    is an exact square.
    \vspace{1mm}
    
 For $m=5$, the five values of $j$ are $(n+5)/2$ and}

  $$(n+5\pm \sqrt{\pm \chi +5n+15})/2~,~~~\, \chi :=\sqrt{10n^2+50n+76}~.$$
  {\rm For $n=12$ and for $+\chi$, one obtains $j=(17\pm 11)/2$, i.~e.
  $j=3$ and~$14$; for $n=12$ and for $-\chi$, one does not obtain
    integer values for~$j$; $(n+5)/2$ is not integer for $n=12$. One has 
    $\sigma (P_{5,12})=\Sigma_{3,0,2,3,3,2,0,3}$. For $n=31$, one obtains
    $\chi =106$; for $-\chi$, one gets $j=(36\pm 8)/2$, i.~e. $j=14$ and~$22$;
    for $+\chi$, one does not obtain integer values for~$j$, so
    $\sigma (P_{5,31})=\Sigma_{10,4,0,3,0,3,0,4,10}$.

     Set $\tilde{\chi}_{\pm}:=\pm \chi +5n+15$.
     For $n=62$, one obtains $\chi =2^2\times 3\times 17=204$,
     $\tilde{\chi}_+=23^2$, $\tilde{\chi}_-=11^2$ and

    $$\begin{array}{ccccc}
      (62+5\pm \sqrt{\tilde{\chi}_+})/2&=&(67\pm 23)/2&=&45~~~\, 
    {\rm and}~~~\, 22~,\\ \\ 
    (62+5\pm \sqrt{\tilde{\chi}_-})/2&=&(67\pm 11)/2&=&39~~~\, {\rm and}~~~\, 
    28~,\end{array}$$
    so $\sigma (P_{5,62})=\Sigma_{22,0,5,0,5,5,0,5,0,22}$. 
    These are all cases with
    $6\leq n\leq 200$ for which there is more than
    one vanishing coefficient in~$P_{5,n}$.}
  \end{rems}

\subsection{Which triples of consecutive signs
  are possible for $\sigma (P_{m,n})$?}

The following lemma (see \cite[Lemma~7]{KoMB})
exhibits some constraints about the presence of vanishing
coefficients in hyperbolic polynomials.

\begin{lm}\label{lmMB}
  Suppose that $V$ is a hyperbolic polynomial of degree $d\geq 2$ with no
  root at $0$. Then:
  
  (1) $V$ does not have two or more consecutive vanishing coefficients.
  
  (2) If $V$ has a vanishing coefficient, then the signs of its surrounding
  two coefficients are opposite.
\end{lm}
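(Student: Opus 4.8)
The plan is to reduce both parts to the behaviour of a hyperbolic \emph{binomial} $\lambda x^r+\mu$ by repeatedly applying two degree-lowering operations that preserve hyperbolicity. The two operations are: differentiation (by Rolle's theorem the derivative of a hyperbolic polynomial is again hyperbolic, and it merely multiplies each coefficient $a_j$ by a positive factor while shifting its index down by one), and the reciprocal transformation $V\mapsto x^{\deg V}V(1/x)$, which reverses the coefficient string and, provided the constant term is non-zero, keeps the degree and sends the roots to their reciprocals, hence preserves hyperbolicity. Composing ``differentiate $p$ times, reverse, differentiate again'' lets me excise a chosen window of consecutive coefficients of $V$ and present it, up to positive scalar factors, as the entire coefficient string of a low-degree hyperbolic polynomial.

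For part (1) I would argue by contradiction. Suppose $V$ has two or more consecutive vanishing coefficients; since $a_0,a_d\neq 0$, these zeros lie inside a maximal block $a_{p+1}=\cdots=a_{q-1}=0$ flanked by $a_p\neq 0$ and $a_q\neq 0$, with $q-p\geq 3$. Differentiating $V$ exactly $p$ times gives a hyperbolic polynomial with constant term $\sim a_p\neq 0$, so I may legitimately reverse it; differentiating the reversed polynomial $d-q$ times then yields a hyperbolic polynomial $C$ of degree exactly $q-p$ whose coefficient string is, up to positive factors, the window $a_p,a_{p+1},\dots,a_q$, with the two non-vanishing entries $a_p,a_q$ sitting at the top and bottom. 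Thus $C(x)=\lambda x^{q-p}+\mu$ with $\lambda\sim a_p\neq 0$, $\mu\sim a_q\neq 0$, and all intermediate coefficients zero. The contradiction is that such a binomial of degree $r=q-p\geq 3$ cannot be hyperbolic: its derivative $r\lambda x^{r-1}$ vanishes only at $0$, where $C$ does not vanish, so all real roots of $C$ are simple, while $x^r=-\mu/\lambda$ has at most two real solutions; hence $C$ has fewer than $r$ real roots.

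For part (2), suppose $a_k=0$ with $1\leq k\leq d-1$. Part (1) already guarantees $a_{k-1}\neq 0$ and $a_{k+1}\neq 0$, so I can run the same excision with $p=k-1$ and $q=k+1$: differentiate $k-1$ times, reverse, then differentiate $d-k-1$ times to obtain a hyperbolic \emph{quadratic} $C(x)=\lambda x^2+\nu$ with $\lambda\sim a_{k-1}$, $\nu\sim a_{k+1}$ and vanishing linear term. A quadratic with no linear term is hyperbolic precisely when its roots $\pm\sqrt{-\nu/\lambda}$ are real, i.e. when $\lambda\nu<0$; since $\lambda,\nu$ are positive multiples of $a_{k-1},a_{k+1}$, this says exactly that $a_{k-1}$ and $a_{k+1}$ have opposite signs.

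The main obstacle I anticipate is the bookkeeping of index shifts, so that the chosen window lands cleanly as the complete coefficient string of $C$, and in particular verifying that the reversal step is admissible (constant term non-zero) at the moment it is applied. This is precisely why the zero block must be anchored at a non-vanishing $a_p$, and it is also why part (1) must be proved before part (2): part (1) is exactly what licenses the assumption $a_{k-1}\neq 0$ needed to start the quadratic reduction.
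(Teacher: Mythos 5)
Your proof is correct. Note first that the paper itself gives no proof of this lemma: it is imported verbatim from \cite[Lemma~7]{KoMB}, so there is no in-paper argument to compare against. Your reduction is the standard and clean way to prove such coefficient constraints: repeated differentiation (Rolle) and the reversal $V\mapsto x^{\deg V}V(1/x)$ (legitimate exactly when the constant term is non-zero, which you correctly arrange by anchoring the window at a non-vanishing $a_p$) excise the window $a_p,\ldots,a_q$ as the full coefficient string of a low-degree hyperbolic polynomial, up to positive factors. The endgame is right in both parts: a binomial $\lambda x^r+\mu$ with $\lambda\mu\neq 0$ and $r\geq 3$ has only simple real roots and at most two of them, so it cannot be hyperbolic; and a hyperbolic $\lambda x^2+\nu$ with $\nu\neq 0$ forces $\lambda\nu<0$. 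The index bookkeeping checks out (the coefficient of $x^{q-j}$ in the final polynomial is a positive multiple of $a_j$ for $p\leq j\leq q$), and you correctly observe that part (1) must precede part (2) to guarantee $a_{k-1}a_{k+1}\neq 0$. An alternative route to part (2) would be Newton's inequalities $\bigl(a_k/\binom{d}{k}\bigr)^2\geq \bigl(a_{k-1}/\binom{d}{k-1}\bigr)\bigl(a_{k+1}/\binom{d}{k+1}\bigr)$, which with $a_k=0$ give $a_{k-1}a_{k+1}\leq 0$ and, combined with part (1), the strict inequality; your argument gets the same conclusion without invoking that machinery.
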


\begin{rem}\label{rem0}
  {\rm For a hyperbolic degree $d$ polynomial $Q$ with non-zero constant term
    it is true that $\ell_+=\tilde{c}$. Indeed, denote by $\lambda$ the
    number of vanishing coefficients of $Q$. By Lemma~\ref{lmMB} the triples of
    signs of coefficients with a vanishing coefficient in the middle are
    of the form $(+,0,-)$ or $(-,0,+)$. Denote by $\tilde{c}^*$ the number of
    sign changes $(+,-)$ or $(-,+)$ and by $\tilde{p}^*$ the number of sign
    preservations of the form $(+,+)$ or $(-,-)$ in $\sigma (Q)$. Then
    $\tilde{c}=\tilde{c}^*+\lambda$ and the number of sign changes in
    $\sigma (Q(-x))$ equals $\tilde{p}^*+\lambda$. But by
    Descartes' rule of signs}

  $$\begin{array}{cc}
    \ell_+\leq \tilde{c}=\tilde{c}^*+\lambda ~,&\ell_-\leq
    \tilde{p}^*+\lambda \\ \\ {\rm and}&
    \tilde{c}^*+\tilde{p}^*+2\lambda =d=\ell_++\ell_-~,\end{array}$$
  {\rm so $\ell_+=\tilde{c}^*+\lambda =\tilde{c}$ and
    $\ell_-=\tilde{p}^*+\lambda$. Observe that $\tilde{p}=\tilde{p}^*$.
    Hence if $\lambda >0$, then $\ell_->\tilde{p}$.}
  \end{rem}

With the help of Lemma~\ref{lmMB} we prove another proposition about the
polynomials $P_{m,n}$:

\begin{prop}\label{propPmn}
  (1) For $m<n$, if a component of $\sigma (P_{m,n})$ equals $+$ (resp. equals
  $-$),
  then at least one of the two surrounding components equals $+$ (resp. $-$).
  In particular, $\sigma _{m+n-1}(P_{m,n})=+$ and
  $\sigma _1(P_{m,n})=\sigma _0(P_{m,n})={\rm sign}((-1)^m)$.

  (2) There are $20$ triples of consecutive components of a generalized sign
  pattern compatible with Lemma~\ref{lmMB}. Out of these the following $12$
  can be encountered in some polynomial $P_{m,n}$:

  \begin{equation}\label{equpossible}
    (\pm ,\pm ,\pm )~,~~~\, (\pm ,\mp ,\mp )~,~~~\, (\pm ,\pm ,\mp )~,~~~\,
  (0,\pm ,\pm )~,~~~\,
  (\pm ,\pm ,0)~~~\, {\rm and}~~~\, (\pm ,0,\mp )~,\end{equation}
while the following $8$ cannot be encountered:

\begin{equation}\label{equimpossible}
  (\pm ,\mp ,\pm )~,~~~\, (0,\pm ,0)~,~~~\, (0,\pm ,\mp )~~~\,
{\rm and}~~~\, (\pm ,\mp ,0)~.\end{equation}
\end{prop}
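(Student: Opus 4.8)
The plan is to reduce everything to a single three‑term recurrence for the coefficients $a_j$ of $P_{m,n}=\sum_{j=0}^da_jx^j$, where $d=m+n$. First I would extract it from the logarithmic derivative: since $P_{m,n}'/P_{m,n}=m/(x-1)+n/(x+1)$, the function $y=P_{m,n}$ solves $(x^2-1)y'=[dx-(n-m)]y$, and matching the coefficients of $x^j$ on both sides gives
\begin{equation}\label{equrec}
(j+1)a_{j+1}=(n-m)a_j-(d+1-j)a_{j-1},
\end{equation}
with the convention $a_{-1}=a_{d+1}=0$. The essential point, valid precisely because $m<n$, is that both multipliers $n-m$ and $d+1-j$ (for $0\le j\le d$) are strictly positive.

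For part (1) I would read off the three boundary statements directly from (\ref{equrec}): $j=d$ gives $a_{d-1}=(n-m)a_d=n-m>0$, and $j=0$ gives $a_1=(n-m)a_0$ with $a_0=P_{m,n}(0)=(-1)^m$, so $\sigma_{m+n-1}(P_{m,n})=+$ and $\sigma_0(P_{m,n})=\sigma_1(P_{m,n})={\rm sign}((-1)^m)$. The general assertion then follows from the sign of one step of the recurrence: if $a_j>0$ while its lower neighbour is not positive, i.e. $a_{j-1}\le 0$, then in (\ref{equrec}) the term $(n-m)a_j$ is positive and $-(d+1-j)a_{j-1}\ge 0$, forcing $a_{j+1}>0$; hence $a_j$ always has a positive neighbour (the two endpoints being covered by the boundary values just computed). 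The case $a_j<0$ is the mirror image. I expect no difficulty here once (\ref{equrec}) is in hand.

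For part (2) I would first obtain the number $20$ from Lemma~\ref{lmMB}: of the $27$ triples over $\{+,-,0\}$, the inadmissible ones are exactly the $7$ having a middle $0$ not flanked by two opposite nonzero signs (this simultaneously excludes two consecutive zeros), leaving $20$. Sorting these $20$ by their middle entry, I would single out the $8$ that carry an \emph{isolated} middle sign — a nonzero middle entry neither of whose neighbours repeats that sign; these are exactly $(\pm,\mp,\pm)$, $(0,\pm,0)$, $(0,\pm,\mp)$ and $(\pm,\mp,0)$. Part (1) forbids each of them in $\sigma(P_{m,n})$ for $m<n$, which yields the list (\ref{equimpossible}). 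For the complementary $12$ triples of (\ref{equpossible}) I would simply exhibit witnesses: by Proposition~\ref{prop4parts}(2) one has $\sigma(P_{2,3})=\Sigma_{2,2,2}$, $\sigma(P_{2,4})=\Sigma_{2,3,2}$ and $\sigma(P_{2,7})=\Sigma_{3,0,2,0,3}$, and a direct inspection shows that between them these three patterns display all twelve triples among their consecutive components.

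The main obstacle is really the first step — recognising that the right object is the recurrence (\ref{equrec}); a naive attempt to control the signs of the $a_j$ through their binomial convolution formula runs into magnitude comparisons that seem hard to settle directly. Once (\ref{equrec}) is established, with the positivity of $n-m$ and $d+1-j$, both parts become short, part (2) being essentially a bookkeeping corollary of part (1).
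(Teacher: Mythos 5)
Your proposal is correct, and it takes a genuinely different route from the paper. The paper proves part (1) and the impossibility of the triples in (\ref{equimpossible}) by induction on $m$, writing $P_{m+1,n+1}=(x^2-1)P_{m,n}$, tracking how each sign change of $\sigma(P_{m,n})$ produces a string of at least two equal signs in $\sigma(P_{m+1,n+1})$, and then invoking Descartes' rule (via Remark~\ref{rem0}) to cap the number of sign changes at $m+1$ and rule out any extra alternation inside the gaps between consecutive strings; this requires a somewhat delicate case analysis of the intermediate string $\omega_C$. You instead extract the three-term recurrence $(j+1)a_{j+1}=(n-m)a_j-(d+1-j)a_{j-1}$ from the first-order ODE $(x^2-1)y'=(dx-(n-m))y$ satisfied by $y=P_{m,n}$, and part (1) drops out in one line from the positivity of $n-m$ and $d+1-j$: if $a_j>0$ and $a_{j-1}\leq 0$ then $a_{j+1}>0$ (I checked the recurrence; it is correct, as are the boundary values $a_{d-1}=n-m$ and $a_1=(n-m)a_0$). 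This is shorter, avoids both the induction and the appeal to Descartes' rule, and makes the hypothesis $m<n$ do its work transparently. Your derivation of the list (\ref{equimpossible}) as exactly the eight admissible triples whose nonzero middle entry is repeated by neither neighbour, hence forbidden by part (1), is a clean way to organize what the paper obtains by its $\omega_L\omega_C\omega_R$ analysis. For the twelve realizable triples your witnesses $\sigma(P_{2,3})=\Sigma_{2,2,2}$, $\sigma(P_{2,4})=\Sigma_{2,3,2}$ and $\sigma(P_{2,7})=\Sigma_{3,0,2,0,3}$ do cover all twelve (the paper uses $P_{2,6}$ and $P_{2,14}$ for the same purpose); the count $27-7=20$ also matches. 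The only thing the paper's inductive argument buys that yours does not is an explicit picture of how the sign strings of $\sigma(P_{m+1,n+1})$ sit over those of $\sigma(P_{m,n})$, but nothing downstream (the Corollary, or parts (2)--(3) of Theorem~\ref{tmuniv}) needs more than the proposition itself together with Lemma~\ref{lmMB}, so your proof is a full substitute.
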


\begin{cor}
  For $1\leq m<n$, the (generalized) sign pattern $\sigma (P_{m,n})$ contains 
  strings of signs $+$ and strings of signs $-$. All these strings are
  of length $\geq 2$, where
  two consecutive strings consist of opposite signs and they 
  either follow one another immediately or are separated by one~$0$. Hence
  in the (generalized) sign patterns $\sigma (P_{m,n})$ all
  sign changes are isolated.
  \end{cor}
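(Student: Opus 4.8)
The plan is to read the corollary as a global repackaging of the purely local constraints already supplied by Proposition~\ref{propPmn} and Lemma~\ref{lmMB}, so no new computation with binomial coefficients should be needed. First I would check that $\sigma(P_{m,n})$ genuinely contains strings of each sign. Since $P_{m,n}$ is monic its leading coefficient is $+$, and by the ``In particular'' part of Proposition~\ref{propPmn}(1) one has $\sigma_0(P_{m,n})={\rm sign}((-1)^m)$; more to the point, for $m\geq 1$ the polynomial has $m\geq 1$ positive roots, hence $\tilde{c}=m\geq 1$ sign changes, which forces both a $+$ and a $-$ to occur. So there are honest maximal runs of $+$ and of $-$ to describe.

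Next I would prove the length-$\geq 2$ claim, which is the core of the statement. This follows at once from Proposition~\ref{propPmn}(1): every component equal to $+$ has at least one neighbour equal to $+$, and symmetrically for $-$. Hence a maximal run of a fixed sign of length $1$ is impossible, for its unique entry would then have no neighbour of the same sign. The two endpoints are handled by the explicit values recorded in the proposition: at the top the leading $+$ is accompanied by $\sigma_{m+n-1}(P_{m,n})=+$, and at the bottom $\sigma_1(P_{m,n})=\sigma_0(P_{m,n})$, so the first and last strings also have length $\geq 2$.

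I would then examine what can sit between two consecutive strings. By maximality the only entries there are zeros, and by Lemma~\ref{lmMB}(1) two consecutive zeros are forbidden, so consecutive strings are either adjacent or separated by exactly one zero. If they are adjacent, two maximal runs of signs abutting one another must carry opposite signs by definition; if they are separated by a single zero, Lemma~\ref{lmMB}(2) forces the two signs flanking that zero to be opposite. Either way consecutive strings carry opposite signs, which is exactly the alternating-with-at-most-one-zero structure asserted.

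Finally, the isolatedness of every sign change is a formal consequence. In the generalized setting a sign change is realized either as an adjacency $(+,-)$/$(-,+)$ or through a zero as $(+,0,-)$/$(-,0,+)$, as already used in Remark~\ref{rem0}. Because each flanking string has length $\geq 2$, each such sign change is immediately preceded and followed by at least one sign preservation drawn from the interior of the two adjacent strings; therefore no two sign changes are consecutive, i.e.\ all sign changes are isolated. The only place I would watch carefully is the endpoint bookkeeping: a naive reading of Proposition~\ref{propPmn}(1) refers to ``two surrounding components,'' and it is precisely the explicit endpoint values stated there that rescue the length-$\geq 2$ property for the first and last strings; apart from that the whole corollary is a short deduction.
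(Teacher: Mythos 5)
Your proposal is correct and follows essentially the route the paper intends: the corollary is stated there without a separate proof, precisely because it is the direct consequence of Proposition~\ref{propPmn}(1)--(2) and Lemma~\ref{lmMB} that you spell out (length $\geq 2$ from the neighbour condition plus the explicit endpoint values, separation by at most one zero from Lemma~\ref{lmMB}, and isolated sign changes as the formal consequence). Your care with the endpoint bookkeeping is exactly the right point to flag, and nothing further is needed.
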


\begin{proof}[Proof of Proposition~\ref{propPmn}]
  There are $27$ a priori possible triples of consecutive components of
  generalized sign patterns -- each  
  component equals $+$, $-$ or $0$. The 7 triples $(0,0,0)$, $(\pm ,0,0)$,
  $(0,0,\pm )$ and $(\pm ,0, \pm )$, and only they, are not compatible
  with Lemma~\ref{lmMB}.
  This proves the first claim of part~(2).

   As $P_{2,6}=x^8+4x^7+4x^6-4x^5-10x^4-4x^3+4x^2+4x+1$, one encounters the triples
  $(\pm ,\pm ,\pm )$, $(\pm ,\mp ,\mp )$ and $(\pm ,\pm ,\mp )$ in
  $\sigma (P_{2,6})$ and the remaining $6$ triples of (\ref{equpossible}) in
  $\sigma (P_{2,14})$, where

  $$\sigma (P_{2,14})=(+,+,+,+,+,+,0,-,-,-,0,+,+,+,+,+,+)~.$$ 
  This proves the second claim of part~(2).

  The rest of the proposition is proved by induction on $m$. The induction base
  is the case of $P_{1,n}$, $n\in \mathbb{N}$, $n>1$, with
  $\sigma (P_{1,n})=(+,\ldots ,+,0,-,\ldots ,-)$ for $n$ odd ($(n+1)/2$ times
  $+$ and $(n+1)/2$ times $-$) and $\sigma (P_{1,n})=\Sigma _{n/2+1,n/2+1}$ for
  $n$ even; in this case all claims of the proposition are readily checked.

   Suppose that the proposition holds true for $P_{m,n}$. Consider
  the polynomial $P_{m+1,n+1}=(x^2-1)P_{m,n}$. Suppose that $\sigma (P_{m,n})$
  contains the following sign change:

  $$\begin{array}{cccc}
    {\rm either}&(\sigma _k(P_{m,n}),\ldots ,\sigma _{k-3}(P_{m,n}))&=&
    (\pm ,\pm ,\mp ,\mp )\\ \\ {\rm or}&
    (\sigma _k(P_{m,n}),\ldots ,\sigma _{k-4}(P_{m,n}))&=&
  (\pm ,\pm ,0,\mp ,\mp )~.\\ \\ 
    {\rm Then}&(\sigma _k(P_{m+1,n+1}),\sigma _{k-1}(P_{m+1,n+1}))&=&
    (\mp ,\mp )\\ \\ 
  {\rm or}&(\sigma _k(P_{m+1,n+1}),\ldots ,\sigma _{k-2}(P_{m+1,n+1}))&=&
  (\mp ,\mp ,\mp )\end{array}$$
  respectively. Thus in $\sigma (P_{m+1,n+1})$ there are $m$
  strings of consecutive equal signs $+$ or $-$ 
  corresponding
  to the $m$ sign changes in $\sigma (P_{m,n})$. It is clear that

  $$\begin{array}{cccccc}\sigma _{m+n+2}(P_{m+1,n+1})&=&\sigma_{m+n+1}(P_{m+1,n+1})&=&+
    &
    {\rm and}\\ \\  
    \sigma _1(P_{m+1,n+1})&=&\sigma_0(P_{m+1,n+1})&=&{\rm sgn}((-1)^{m+1})
  \end{array}$$
  which defines another $2$ strings. The signs $\pm$ of these
  $m+2$ strings alternate; to prove this one can use the fact that
  $\sigma _k(P_{m,n})=-\sigma_k(P_{m+1,n+1})$.
  Hence there are $\geq m+1$ sign changes in
  $\sigma(P_{m+1,n+1})$.

But by Descartes' rule of signs these changes are
  exactly $m+1$, see Remark~\ref{rem0}. We show that then $\sigma(P_{m+1,n+1})$
  cannot contain a triple of
  consecutive components $(\pm ,\mp ,\pm )$, $(0,\pm ,\mp )$, 
  $(\pm ,\mp ,0)$ or $(0,\pm ,0)$.
  Indeed, consider two consecutive strings of equal signs
  in $\sigma (P_{m+1,n+1})$ corresponding to sign changes in $\sigma (P_{m,n})$:
  $\omega_L:=(\pm ,\ldots ,\pm )$ and
  $\omega _R:=(\mp ,\ldots ,\mp )$. We consider the case when
  $\omega_L:=(+,\ldots ,+)$ and $\omega _R:=(-,\ldots ,-)$, the opposite
  case is treated by analogy. 
  We assume that $\omega_L$ and $\omega _R$ are maximal, i.~e.
  they cannot be further extended.

  Denote by $\omega_C$ the possible string of components of
  $\sigma (P_{m+1,n+1})$
  between them. Suppose that $\omega _C$ has exactly two components. Then
  they cannot be $(0,0)$, $(0,+)$ or $(-,0)$ (contradiction with
  Lemma~\ref{lmMB}), neither $(+,0)$, $(0,-)$, $(+,+)$, $(+,-)$ or $(-,-)$
  (contradiction with the maximality of $\omega _L$ and $\omega _R$). If they
  are $(-,+)$, then there will be more that $m+1$ sign changes in
  $\sigma (P_{m+1,n+1})$. Hence $\omega_C$ cannot have exactly two components. 

Suppose that $\omega _C$ has $w\geq 3$ components. Set
  $\omega _C:=(\alpha _1,\ldots ,\alpha _w)$. 
  Then in the same way $\alpha _1=0$ or $\alpha _1=-$ while
  $\alpha _w=0$ or $\alpha _w=+$. If $\alpha _1=0$
  (resp. $\alpha_w =0$), then by Lemma~\ref{lmMB},
  $\alpha_2=-$ (resp. $\alpha_{w-1}=+$). If $(\alpha_1,\alpha_2)=(0,-)$, then
  between $\alpha _2$ and $\omega_R$ there is no $+$, otherwise
  $\sigma (P_{m+1,n+1})$ has more than $m+1$ sign changes. If all components
  between $\alpha _2$ and $\omega_R$ equal $-$,
  then this contradicts the maximality of $\omega_C$. If there is at least
  one $0$, this contradicts Lemma~\ref{lmMB}.
  So $(\alpha_1,\alpha_2)=(0,-)$ is impossible and similarly one cannot have
  $(\alpha_{w-1},\alpha_w)=(+,0)$. But then $\alpha_1=-$ and $\alpha_w=+$ which
  means that $\sigma (P_{m+1,n+1})$ has more than $m+1$ sign changes.

 Thus either $\omega =\emptyset$ or $\omega$ consists of just one component.
  As $\omega_L$ and $\omega_R$ are maximal, this component equals~$0$.
  Thus in the string $\omega_L\omega_C\omega_R$
  (and hence in $\sigma (P_{m+1,n+1})$) one does not encounter any
  of the triples (\ref{equimpossible}). 
\end{proof}

\subsection{When does the coefficient of $x^k$ of $P_{m,n}$ vanish?}

In this subsection we ask the question when for $k=1$, $\ldots$, $5$ does
the coefficient of $x^k$ of the polynomial $P_{m,n}$ vanish.

\begin{rem}
  {\rm The coefficient of $x$ of the polynomial $P_{m,n}$
  vanishes if and
  only if $m=n$. Indeed, this coefficient equals ${n\choose 1}-{m\choose 1}$.
  Hence this is the (even) polynomial $(x^2-1)^m$.}
  \end{rem}

\begin{prop}\label{propcoeffxk}
  (1) For $m<n$, the coefficient of $x^2$ of the polynomial $P_{m,n}$
  vanishes if and
  only if $m=s(s-1)/2$, $n=s(s+1)/2$,
  $s\in \mathbb{N}^*$.

  (2) For $m<n$, the coefficient of $x^3$ of the polynomial $P_{m,n}$
  vanishes if and only if $m=(s-1)(s-2)/6$,
  $n=(s+1)(s+2)/6$, where $s\in \mathbb{N}^*$
  is not divisible by~$3$.
\end{prop}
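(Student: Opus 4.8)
The plan is to extract the coefficients of $x^2$ and $x^3$ in closed form and reduce each vanishing condition to a Diophantine equation that I can solve explicitly. Writing $P_{m,n}=(-1)^m(1-x)^m(1+x)^n$, the coefficient of $x^k$ in $P_{m,n}$ vanishes if and only if the coefficient of $x^k$ in $F:=(1-x)^m(1+x)^n$ vanishes, so it suffices to study $F$. Since $F=\exp\bigl(m\log(1-x)+n\log(1+x)\bigr)$ and
$$m\log(1-x)+n\log(1+x)=\sum_{k\geq 1}g_kx^k,\qquad g_k=\frac{(-1)^{k-1}n-m}{k},$$
I would expand $F=\exp(g_1x+g_2x^2+\cdots)$ as a power series, obtaining for the first few coefficients $[x^1]F=g_1$, $[x^2]F=g_2+\tfrac12 g_1^2$ and $[x^3]F=g_3+g_1g_2+\tfrac16 g_1^3$, with $g_1=n-m$, $g_2=-(m+n)/2$ and $g_3=(n-m)/3$.

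For part (1), substituting gives $[x^2]F=\bigl((n-m)^2-(m+n)\bigr)/2$, so this coefficient vanishes if and only if $(n-m)^2=m+n$. Setting $s:=n-m\geq 1$ (using $m<n$), the relation $s^2=m+n=2m+s$ yields $m=s(s-1)/2$ and hence $n=m+s=s(s+1)/2$; conversely these values are always integers and satisfy $(n-m)^2=m+n$. This establishes the equivalence, and no divisibility constraint on $s$ is needed because $s(s-1)$ is automatically even.

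For part (2), substituting gives $[x^3]F=(n-m)\bigl(\tfrac13-\tfrac{m+n}{2}+\tfrac{(n-m)^2}{6}\bigr)$. Since $m<n$, the factor $n-m$ is nonzero, so the coefficient vanishes if and only if the bracket vanishes, i.e. if and only if $(n-m)^2-3(m+n)+2=0$. Again put $s:=n-m\geq 1$; then $6m=s^2-3s+2=(s-1)(s-2)$, giving $m=(s-1)(s-2)/6$ and $n=m+s=(s+1)(s+2)/6$, which matches the stated parametrization.

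The one point requiring care — and the only genuine subtlety in the argument — is the integrality bookkeeping in part (2). In the forward direction $m$ is an integer by hypothesis, so $6\mid (s-1)(s-2)$; since $(s-1)(s-2)$ is a product of consecutive integers it is always even, whence the condition reduces to $3\mid (s-1)(s-2)$, which is equivalent to $3\nmid s$. In the converse direction the same divisibility analysis shows that $3\nmid s$ is precisely the condition guaranteeing that both $m=(s-1)(s-2)/6$ and $n=(s+1)(s+2)/6$ are integers (with $m<n$). Thus the vanishing of the coefficient of $x^3$ corresponds exactly to the values of $s\in\mathbb{N}^*$ not divisible by $3$, as claimed.
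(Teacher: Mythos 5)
Your proof is correct and follows essentially the same route as the paper: both reduce the vanishing conditions to $(n-m)^2=m+n$ and $(n-m)^2-3(m+n)+2=0$, parametrize by $s:=n-m$, and settle integrality (with the same observation that $(s-1)(s-2)$ is automatically even, so only divisibility by $3$ matters). The only cosmetic difference is that you extract the coefficients via $\exp\bigl(m\log(1-x)+n\log(1+x)\bigr)$ while the paper expands the binomial product directly; the resulting closed forms are identical.
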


Hence there exist infinitely many values of $m$ for which there exists $n>m$
such that the polynomial $P_{m,n}$ has more than one vanishing coefficient.

  \begin{proof}
    Part (1). The coefficient of $x^2$ of $P_{m,n}$ equals

    $$m(m-1)/2-mn+n(n-1)/2=((n-m)^2-(m+n))/2~.$$
    It vanishes exactly when $(n-m)^2=m+n$. Set $n-m=:s$, $m+n=:d$.
    Hence $d=s^2$, $m=s(s-1)/2$,
    $n=s(s+1)/2$, $s\in \mathbb{N}^*$.

     Part (2). The coefficient of $x^3$ of $P_{m,n}$ equals

    $${n\choose 3}-{n\choose 2}m+n{m\choose 2}-{m\choose 3}=
    (n-m)((n-m)^2-3(n+m)+2)/6~.$$
    It vanishes exactly when $d:=n+m=((n-m)^2+2)/3$, so one sets
    $s:=n-m$ hence $n+m=(s^2+2)/3$, $n=(s+1)(s+2)/6$
    and $m=(s-1)(s-2)/6$. For $s\in \mathbb{N}^*$, the
    numbers $m$ and $n$ are integer if and only if $s$
    is not divisible by~3. 
  \end{proof}

  \begin{rems}\label{remscoeffxk}
    {\rm (1) One can try to find in the same way values of the couple
      $(m,n)$ for
      which the coefficient of $x^4$ of the polynomial $P_{m,n}$ vanishes. This
      coefficient equals $\kappa :=(s^4-6s^2d+8s^2+3d^2-6d)/24$,
      where $d=m+n$ and $s=n-m$. The condition $\kappa =0$ yields}

    \begin{equation}\label{equd1}
      d=s^2+1\pm \sqrt{1+2(s^4-s^2)/3}~.
      \end{equation}
    {\rm For $s\leq 20$, one gets integer values for $m=(d-s)/2$ and $n=(d+s)/2$
      only for $s=1$, $2$, $3$ and $6$. One can exclude the trivial cases
      with $m+n<4$. The remaining cases are:}

$$(m,n)~=~(3,5),~(7,10),~(30,36)~~~\, {\rm and}~~~\, (1,7)~.$$
    {\rm The latter couple is the only one in which one chooses the
      sign ``$-$'' in
      formula~(\ref{equd1}).
   We list the signs of the last $6$ coefficients of
   the polynomials $P_{m,n}$ corresponding to these $4$ cases:}

    $$(-,0,+,+,-,-),~(-,0,+,+,-,-),~(+,0,-,-,+,+)~~~\, {\rm and}~~~\,
    (+,0,-,-,-,-)~.$$
    {\rm For $21\leq s\leq 200$, only for $s=91$ does one find integer couples
    $(m,n)$, namely $(715,806)$ with the sign ``$-$'' and $(7476,7567)$
      with the sign~``$+$''. The signs of their $6$ last coefficients are
    $(+,0,-,-,-,-)$ and $(+,0,-,-,+,+)$ respectively.}
    \vspace{1mm}
    
    {\rm (2) Consider the couples $(m,n)$ for which the coefficient of $x^5$
      of $P_{m,n}$ equals~0. With the notation of part~(1) this coefficient
      is equal to $d(s^4-10s^2d+20s^2+15d^2-50d+24)/120$. It vanishes for}

    \begin{equation}\label{equd2}
d=(1/15)(5s^2+25\pm \sqrt{10s^4-50s^2+265})~.
    \end{equation}
        {\rm For $s\leq 20$, integer values for $m$ and $n$ are obtained for
          $s=1$, $2$, $3$, $4$, $8$ and $11$. Eliminating the couples $(m,n)$
          with $m+n<5$ leaves $5$ couples:}

        $$(m,n)~=~(3,7),~(14,22),~(1,9),~(28,39)~~~\, {\rm and}~~~\, (3,14)~.$$
        {\rm The signs of their last $7$ coefficients equal respectively}

        $$\begin{array}{lcl}
          (-,0,+,+,-,-,-)~,&(+,0,-,-,+,+,+)~,&(+,0,-,-,-,-,-)~,\\ \\
          (+,0,-,-,+,+,+)&{\rm and}&(+,0,-,-,-,-,-)~.
        \end{array}$$
        {\rm For the third and fifth case one chooses the sign ``$-$'' (see
          (\ref{equd2})), for the rest the sign~``$+$''. For $21\leq s\leq 200$,
          one obtains integer couples $(m,n)$ only for $s=23$. These are 
          $(22,45)$ with the sign ``$-$'' and $(133,156)$
          with the sign~``$+$''. The signs of their $7$ last coefficients are
        $(-,0,+,+,+,+,+)$ and $(-,0,+,+,-,-,-)$ respectively.}
    \end{rems}

\section{Proof of Theorem~\protect\ref{tmuniv} 
  \protect\label{secprtmuniv}}

Part (1). For any order $\Omega$ of $m$ moduli of positive and $n$
  moduli of negative roots,
  one can find $m$ positive numbers $\alpha _i$ and $n$ negative numbers
  $-\beta_j$ such that their moduli are all distinct and
  realize the order~$\Omega$. Then for all
  $\varepsilon >0$ sufficiently small, the moduli of the numbers
  $1+\varepsilon \alpha _i$ and $-1-\varepsilon \beta _j$ are also distinct
  and realize the order $\Omega$.

  On the other hand for $\varepsilon >0$ sufficiently small, the
  coefficients of the polynomial

  $$\tilde{P}:=\prod_{i=1}^m(x-1-\varepsilon \alpha _i)\prod_{j=1}^n(x+1+
  \varepsilon \beta _j)$$
  are close to the ones of the polynomial $P_{m,n}$,
  so $\sigma (\tilde{P})=\sigma (P_{m,n})$. Hence $\sigma (P_{m,n})$ is a
  universal sign pattern.
  \vspace{1mm}
  
Part (2). A universal sign pattern as in the theorem is realizable
  with the
  order of moduli $NPNPN\ldots N$. This is a rigid order, see part (2) of
  Remarks~\ref{remscanrig}. It is realizable
  only with the sign pattern $\Sigma _{2,2,\ldots ,2}$. On the other hand
  by part (4) of Proposition~\ref{prop4parts} the polynomial $P_{m,m+1}$
  has no vanishing coefficients. This polynomial has $2m+2$ coefficients and
  $m$ sign changes.
  By part (1) of Proposition~\ref{propPmn}, $\sigma (P_{m,m+1})$ contains
  $m+1$ sequences of equal signs, each
  of length $\geq 2$, so 
  $\sigma (P_{m,m+1})=\Sigma _{2,2,\ldots ,2}$.
  \vspace{1mm}
  
  Part (3). Suppose that $m$ is even. The polynomial $P_{m,m+2}$ has $2m+3$
  coefficients. By part (1) of Proposition~\ref{propPmn}, there must be
  $m+1$ sequences of at least two consecutive coefficients of equal signs.
  Hence there are
  $m$ sequences of two and one sequence of three such coefficients.
  The polynomial is self-reciprocal, so the sequence of three coefficients
  is in the middle.

  Suppose that $m$ is odd. The middle coefficient of $P_{m,m+2}$ is $0$.
  By part (1) of Proposition~\ref{propPmn}, the rest
  of its $2m+3$ coefficients form $m+1$ couples of two consecutive
  coefficients of equal signs, $(m+1)/2$ to the left and $(m+1)/2$
  to the right of the~$0$ in the middle.

\end{document}